\author{Anat Amir}
\title{Sharpness of Zapolsky inequality for quasi-states and Poisson brackets}
\newtheorem{thm}{Theorem}
\newtheorem{lem}{Lemma}
\theoremstyle{remark}
\newtheorem{rem}{Remark}
\theoremstyle{definition}
\newtheorem{defi}{Definition}
\newtheorem{exam}{Example}
\begin{document}

\onehalfspacing

\maketitle

\begin{abstract}
Zapolsky inequality gives a lower bound for the $L_1$ norm of the Poisson bracket of a pair of $C^1$ functions on the two-dimensional sphere by means of quasi-states. Here we show that this lower bound is sharp.
\end{abstract}

\section{Introduction and main results}
\subsection{Quasi-states and quasi-measures}
\paragraph*{}
Denote by $C(S^2)$ the Banach algebra of real continuous functions on $S^2$ taken with the supremum norm.
\paragraph*{}
For $F \in C(S^2)$, write $C(F) = \left\{ \varphi \circ F \vert \varphi \in C(Im(F)) \right\}$. That is, $C(F)$ is the closed sub-algebra generated by $F$ and the constant function $1$.
\begin{defi}
A \emph{quasi-state} on $S^2$ is a functional $\zeta : C(S^2) \rightarrow \mathbb{R}$ satisfying: 
\begin{enumerate}
\item $\zeta(F) \geq 0$ for $F \geq 0$.
\item $\forall F \in C(S^2)$, $\zeta$ is linear on $C(F)$.
\item $\zeta(1) = 1$.
\end{enumerate}
\end{defi}
Denote by $\mathcal{Q} (S^2)$ the collection of quasi-states on $S^2$.
\begin{rem} \label{rem:qsp0}
It was proven in \cite{Aa1} that for a quasi-state $\zeta$ and a pair $F,G \in C(S^2)$ we have: 
\[F \leq G \Rightarrow \zeta(F) \leq \zeta(G) \ . \]
\end{rem}
\paragraph*{}
A quasi-state $\zeta$ is \emph{simple} if for every $F \in C(S^2)$, $\zeta$ is multiplicative on $C(F)$.
A quasi-state $\zeta$ is \emph{representable} if it is the limit of a net of convex combinations of simple quasi-states. That is, $\zeta$ is an element of the closed convex hull of the subset of simple quasi-states.
\paragraph*{}
Denote by $\mathcal{C}$ and $\mathcal{O}$ the collections of closed and open subsets of $S^2$ respectively. Write $\mathcal{A} = \mathcal{C} \cup \mathcal{O}$.
\begin{defi}
A \emph{quasi-measure} $\tau$ on $S^2$ is a function $\tau : \mathcal{A} \rightarrow [0, 1]$ satisfying: 
\begin{enumerate}
\item $\tau(S^2) = 1$.
\item For $B_1, B_2 \in \mathcal{A}$ with $B_1 \subset B_2$, $\tau(B_1) \leq \tau(B_2)$.
\item If $\left\{ A_k \right\}_{k=1}^{n} \subset \mathcal{A}$ is a finite collection of pairwise disjoint subsets whose union is in $\mathcal{A}$, then $\tau(\bigcup_{k=1}^{n} A_k) = \sum_{k=1}^{n} \tau (A_k)$.
\item For $U \in \mathcal{O}$, $\tau(U) = \sup \left\{ \tau(K) \ : \ K \in \mathcal{C} \text{ and } K \subset U \right\}$.
\end{enumerate}
\end{defi}
Denote by $\mathcal{M} (S^2)$ the collection of quasi-measures on $S^2$.
A quasi-measure is \emph{simple} if it only takes values of $0$ and $1$.
\paragraph*{}
It was proven in \cite{Aa1} that there exists a bijection between $\mathcal{Q}(S^2)$ and $\mathcal{M}(S^2)$.
For a quasi-state $\zeta$, the corresponding quasi-measure is: 
\[\tau(A) = \left\{ \begin{array}{ll} 
A \in \mathcal{C}, & \inf \left\{ \zeta(F) \ : \ F \in C(S^2) \text{ and } F \geq 1_A \right\} \\ 
A \in \mathcal{O}, & 1 - \tau(S^2 \setminus A) 
\end{array} \right. \ , \]
here $1_A$ is the indicator function on the set $A$.
The corresponding quasi-state to a quasi-measure $\tau$ is defined as follows: 
\[\zeta(F) = \int_{S^2} {F d \tau} = \max_{S^2} {F} - \int_{\min_{S^2} {F}}^{\max_{S^2} {F}} {b_F (x) dx } \ , \]
with $b_F (x) = \tau (\left\{ F < x \right\})$.
It was proven in \cite{Aa2} that this bijection matches simple quasi-states with simple quasi-measures.
For further details about quasi-states and quasi-measures refer to \cite{Aa1} and  for details on simple quasi-states and quasi-measures refer to \cite{Aa2}.
\paragraph*{}
Throughout this paper we will be interested in the extent of non-linearity of a quasi-state. To measure this we will use the following notation: 
\begin{defi}
Let $\zeta$ be a quasi-state and take $F,G \in C(S^2)$. The extent of non-linearity of $\zeta$ can be measured by: 
\[\Pi(F,G) := \vert \zeta(F+G) - \zeta(F) - \zeta(G) \vert \ . \]
\end{defi}
\begin{exam} \label{ex:3point} 
One example of a simple quasi-state is Aarnes' 3-point quasi-state.
\begin{defi} A subset $S \subseteq S^2$ is called \emph{solid} if it is connected and its complement $S^{c} = S^2 \setminus S$ is also connected.
Denote by $\mathcal{C}_s$ the set of all closed and solid subsets of $S^2$ and by $\mathcal{O}_s$ the set of all open and solid subsets of $S^2$. Write $\mathcal{A}_s = \mathcal{C}_s \cup \mathcal{O}_s$.
\end{defi}
Take $p_1, p_2, p_3 \in S^2$ to be three distinct points on the sphere.
Define $\tau : \mathcal{C}_s \rightarrow \left\{ 0,1 \right\}$ by: 
\[\tau(C) = \left\{ \begin{array}{lr} 0, & \#\left\{C \cap \left\{ p_1,p_2,p_3 \right\} \right\} \leq 1 \\ 
1, & \#\left\{ C \cap \left\{ p_1,p_2,p_3 \right\} \right\} \geq 2 \end{array} \right. \ .\]
As proved in \cite{Aa3}, $\tau$ can be extended to a quasi-measure on $S^2$. It is further shown in that article that this extension is in-fact a simple quasi-measure. The simple quasi-state corresponding to the extended quasi-measure is called Aarnes' 3-point quasi-state.
We refer the reader to \cite{Aa3} for the full definition of the extended quasi-measure $\tau$. For our purpose it suffices to note that on $\mathcal{A}_s$, $\tau$ satisfies: 
\[\tau(S) = \left\{ \begin{array}{lr} 0, & \#\left\{S \cap \left\{ p_1,p_2,p_3 \right\} \right\} \leq 1 \\ 
1, & \#\left\{ S \cap \left\{ p_1,p_2,p_3 \right\} \right\} \geq 2 \end{array} \right. \ . \]
\end{exam}
\begin{exam} \label{ex:median} 
Another example of a simple quasi-state is the median of a Morse function.
Let $\Omega$ be an area form on $S^2$.
The \emph{median} of a Morse function $F$ is the unique connected component of a level set of $F$, $m_F$, for which every connected component of $S^2 \setminus m_F$ has area $\leq \frac{1}{2} \cdot \int_{S^2} \Omega$.
Define $\zeta$ on the set of Morse functions as $\zeta(F) = F(m_F)$. As explained in \cite{EPZ}, $\zeta$ can be extended to $C(S^2)$ and is in-fact a quasi-state. For further explanation of the concept of the median and the construction of $\zeta$ we refer the reader to \cite{EPZ}.
It can be easily verified that the quasi-measure corresponding to $\zeta$ is the extension of $\tau : \mathcal{C}_s \rightarrow \left\{ 0,1 \right\}$ defined as: 
\[\tau(C) = \left\{ \begin{array}{lr} 0, & \int_{C} \Omega < \frac{1}{2} \cdot \int_{S^2} \Omega \\ 1, & \int_{C} \Omega \geq \frac{1}{2} \cdot \int_{S^2} \Omega \end{array} \right. \]
to a quasi-measure on $S^2$ as in \cite{Aa3}. In-fact, as explained in \cite{Aa3}, this extension is a simple quasi-measure, and hence $\zeta$ is a simple quasi-state.
\end{exam}
\subsection{Poisson bracket}
\paragraph*{}
Let $\omega$ be an area form on $S^2$.
Given a hamiltonian $F : S^2 \rightarrow \mathbb{R}$, we define the hamiltonian vector field $I dF : S^2 \rightarrow T S^2$ by the formula: 
\[ dF(x) (\eta) = \omega(\eta, I dF (x)) \ , \forall x \in S^2 , \eta \in T_x S^2\ . \]
The hamiltonian flow with hamiltonian function $F$ is the one-parameter group of diffeomorphisms $\left\{{g^t}_F \right\}$ satisfying:
\[ \left. \dfrac{d}{dt} \right|_{t=0} {g^t}_F x = I dF (x) \ . \]
If $F, G$ are two hamiltonian functions on $S^2$, then their \emph{Poisson bracket} is defined as: 
\[ \left\{ F , G \right\}(x) = \left. \dfrac{d}{dt} \right\vert_{t = 0} F({g^t}_G (x)) \ . \]
The Poisson bracket also satisfies the following formula: 
\[ \left\{ F, G \right\} = d F (I dG) = - \omega (I dF, I dG) \ . \]
For further reading on Poisson bracket we refer the reader to \cite{Ar1}.
\begin{rem} \label{rem:pb}
In this paper we are interested in the $L_1$-norm of the Poisson bracket. Note that on $S^2$ we have: 
\[ dF \wedge dG = - \left\{ F , G \right\} \cdot \omega \ , \]
therefore: 
\[\left\Vert \left\{ F, G \right\} \right\Vert_{L_1} = 
\int_{S^2} \left| \left\{ F , G \right\} \right| \omega = 
\int_{S^2} \left| dF \wedge dG \right| \ .\]
\end{rem}
\subsection{Zapolsky's inequality}
Zapolsky's inequality (\cite{Za}, theorem 1.4) relates the extent of non-linearity of a quasi-state to the $L_1$ norm of the Poisson bracket.
Let $\zeta$ be a representable quasi-state on $S^2$, then by Zapolsky's inequality for every $F,G \in C^1(S^2)$ we have: 
\[ \Pi(F,G)^2 \leq \Vert \left\{ F,G \right\} \Vert_{L_1} \ . \]
Note that this result can also be written as: 
\[ \sup_{F,G \in C^1 (S^2)} \dfrac{\Pi(F,G)^2}{\Vert \left\{ F,G \right\} \Vert_{L_1}} \leq 1 \ . \]
Our goal in this paper is to show that for some quasi-states Zapolsky's inequality is sharp. That is, we will show that there exist quasi-states for which: 
\[ \sup_{F,G \in C^{1} (S^2)} \dfrac{\Pi(F,G)^2}{\Vert \left\{ F, G \right\} \Vert_{L_1}} = 1\ . \]
\subsection{Main Results}
\paragraph*{}
\begin{thm} \label{thm:3pnt_max}
Let $\zeta$ be Aarnes' 3-point quasi-state, then:
\[ \max_{F,G \in C^{\infty} (S^2)} \dfrac{\Pi(F,G)^2}{\Vert \left\{ F,G \right\} \Vert_{L_1}} = 1 \ . \]
\end{thm}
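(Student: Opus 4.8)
The plan is to use Zapolsky's inequality for the upper bound $\le 1$ (which applies since $C^\infty(S^2)\subset C^1(S^2)$), and then to exhibit a single pair $F,G\in C^\infty(S^2)$ for which the ratio equals $1$; the attainment is what upgrades the supremum to a maximum.

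The model I would start from realizes $S^2$ as the double of the right triangle $T=\{(u,v): u,v\ge 0,\ u+v\le 1\}$, i.e.\ two copies $T_+,T_-$ of $T$ glued along $\partial T$, and takes $F,G$ to be the two coordinate functions $u,v$ (equal on the two copies, hence continuous across the gluing circle), with $p_1,p_2,p_3$ placed at the three vertices $(0,0),(1,0),(0,1)$. In this model both quantities are immediate. By Remark~\ref{rem:pb}, $\Vert\{F,G\}\Vert_{L_1}=\int_{S^2}|dF\wedge dG|=\int_{S^2}|du\wedge dv|$, and since $\Phi=(F,G)$ covers $T$ exactly twice (once per copy, with opposite orientations) this equals $2\,\mathrm{Area}(T)=1$. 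For the quasi-state I would use the solid-set description of Aarnes' quasi-measure from Example~\ref{ex:3point}: because $T$ is convex, each sublevel set $\{F<c\}$, $\{G<c\}$, $\{F+G<c\}$ is a band or corner neighborhood that is connected with connected complement, hence solid, so its $\tau$-value is $0$ or $1$ according to whether it contains at most one or at least two of the vertices. Tracking the vertex incidences gives $\zeta(F)=\zeta(G)=0$ (the edges $\{u=0\}$ and $\{v=0\}$ already each carry two vertices) and $\zeta(F+G)=1$ (a sublevel set of $u+v$ only captures a second vertex when the level reaches $1$). Hence $\Pi(F,G)=|1-0-0|=1$ and the ratio is $1^2/1=1$.

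The remaining work is to replace this piecewise-smooth model by genuine $F,G\in C^\infty(S^2)$ without changing any of the three numbers $\zeta(F),\zeta(G),\zeta(F+G)$ nor the integral $\int_{S^2}|dF\wedge dG|$. I would do this by taking $\Phi=(F,G)$ to be a smooth \emph{fold} map of $S^2$ onto $T$: a smooth simple closed curve $\Gamma$ splits $S^2$ into two disks, each mapped diffeomorphically onto the interior of $T$ with $\Gamma\mapsto\partial T$, the map being an immersion off $\Gamma$. Off the measure-zero fold $\Gamma$ the local area is preserved, so $\int_{S^2}|dF\wedge dG|$ remains $2\,\mathrm{Area}(T)=1$; arranging the corner behavior so that the sublevel sets keep the same connectivity and the same vertex incidences preserves the three $\zeta$-values, and hence $\Pi=1$ and the ratio stays exactly $1$.

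The main obstacle is precisely this smoothing at the three vertices, where the doubled triangle carries cone points and the coordinate functions fail to be differentiable. One must arrange the local models of $F,G$ there so that simultaneously (i) the sublevel sets of $F$, $G$, and especially of $F+G$ remain solid with the correct vertex incidences, so that the solid-set formula of Example~\ref{ex:3point} still returns $\zeta(F)=\zeta(G)=0$ and $\zeta(F+G)=1$, and (ii) no extra Jacobian area is created, so that $\Vert\{F,G\}\Vert_{L_1}$ is not pushed above $1$. Controlling the topology of the sublevel sets of the sum $F+G$ through the smoothing is the delicate point, since $F+G$ is exactly where the nonlinearity of $\zeta$ is concentrated. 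Should an exact smooth realization at the corners prove awkward, the fallback is to build a family $(F_\varepsilon,G_\varepsilon)\subset C^\infty(S^2)$ with ratio tending to $1$ and then argue separately that the supremum is attained.
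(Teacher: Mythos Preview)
Your overall strategy---use Zapolsky for $\le 1$, then produce a smooth map $\Phi=(F,G):S^2\to T$ onto the closed unit right triangle that covers $\mathrm{int}(T)$ twice, so that $\int_{S^2}|dF\wedge dG|=2\,\mathrm{Area}(T)=1$ while $\zeta(F)=\zeta(G)=0$, $\zeta(F+G)=1$---is exactly the paper's, and the paper likewise reduces to a convenient triple of points via Remark~\ref{rem:3pnt_rat}.

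The gap is in your implementation at the corners, which you correctly flag as the obstacle but do not resolve. A smooth fold map $S^2\to\mathbb{R}^2$ has a smooth fold curve whose image in the plane is locally a smooth immersed arc; it therefore cannot coincide with $\partial T$, which has three genuine corners. If you round $T$ to a smooth convex domain $T_\epsilon$ to make the fold picture work, the three vertices are no longer on the level sets $\{F=0\}$, $\{G=0\}$, $\{F+G=1\}$: one gets $\zeta(F),\zeta(G)\approx\epsilon$, $\zeta(F+G)\approx 1-\epsilon$, while $\|\{F,G\}\|_{L_1}=2\,\mathrm{Area}(T_\epsilon)=1-O(\epsilon^2)$, so the ratio is $\approx(1-3\epsilon)^2<1$. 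Your fallback---build a sequence with ratio $\to 1$ and ``argue separately that the supremum is attained''---has no compactness to support it, and the theorem asserts a \emph{maximum}.

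The paper's resolution is to abandon the fold picture. It writes $\Phi=\psi\circ P$, with $P:S^2\to\mathcal{D}$ the projection to the unit disk and $\psi:\mathcal{D}\to\mathrm{cl}(\Delta)$ a smooth map that \emph{collapses} three quarters of $\mathcal{D}$ (and the inner disk $\{x^2+y^2\le\tfrac12\}$) onto $\partial\Delta$, sending only an annular piece of the first quadrant bijectively onto $\Delta$. On the collapsed region $d f\wedge d g=0$, so it contributes nothing to $\|\{F,G\}\|_{L_1}$, and the many-to-one change of variables still gives exactly $2\,\mathrm{Area}(\Delta)=1$. The payoff on the quasi-state side is that $F\equiv 0$ on the solid half-sphere $\{x\le 0\}\ni p_2,p_3$, $G\equiv 0$ on $\{y\le 0\}\ni p_1,p_3$, and $F+G\equiv 1$ on the solid equatorial arc $\{x,y\ge 0,\,z=0\}\ni p_1,p_2$; hence $\zeta(F)=\zeta(G)=0$ and $\zeta(F+G)=1$ drop out immediately from the solid-set formula, with no need to track sublevel-set topology through any smoothing. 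This collapsing trick is the idea your proposal is missing.
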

\begin{thm} \label{thm:med_sup} 
Let $\omega$ be a normalized area form on $S^2$, that is $\int_{S^2} \omega = 1$, and $\zeta$ the corresponding median quasi-state. Then we have: 
\[ \sup_{F,G \in C^{\infty} (S^2) } \dfrac{ \Pi(F,G)^2}{\Vert \left\{ F,G \right\} \Vert_{L_1}}  = 1 \ . \]
\end{thm}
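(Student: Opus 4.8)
The plan is to deduce Theorem \ref{thm:med_sup} from Theorem \ref{thm:3pnt_max} by degenerating the area form so that the median quasi-state imitates Aarnes' $3$-point quasi-state. Zapolsky's inequality already gives $\Pi(F,G)^2/\Vert\{F,G\}\Vert_{L_1}\le 1$, so it suffices to construct smooth pairs $(F_n,G_n)$ along which the ratio tends to $1$. I would fix three distinct points $p_1,p_2,p_3\in S^2$, let $\zeta_3$ denote the corresponding $3$-point quasi-state, and invoke Theorem \ref{thm:3pnt_max} to obtain a pair $F_*,G_*\in C^\infty(S^2)$ with
\[
\frac{\Pi_{\zeta_3}(F_*,G_*)^2}{\Vert\{F_*,G_*\}\Vert_{L_1}}=1 ,
\]
the $L_1$-norm being taken with respect to the fixed normalized form $\omega$ (after applying Moser's theorem one may assume Theorem \ref{thm:3pnt_max} is realised for this $\omega$). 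The family will be $F_n=F_*\circ\psi_n$, $G_n=G_*\circ\psi_n$ for a suitable sequence of orientation-preserving diffeomorphisms $\psi_n$ of $S^2$.

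Two observations drive the argument. First, the $L_1$-norm of the bracket is a diffeomorphism invariant: since $d(F_*\circ\psi_n)\wedge d(G_*\circ\psi_n)=\psi_n^{*}(dF_*\wedge dG_*)$, the change-of-variables formula gives
\[
\Vert\{F_n,G_n\}\Vert_{L_1}=\int_{S^2}\bigl|\psi_n^{*}(dF_*\wedge dG_*)\bigr|=\int_{S^2}\bigl|dF_*\wedge dG_*\bigr|=\Vert\{F_*,G_*\}\Vert_{L_1},
\]
so the denominator stays constant along the family. Second, the median quasi-state transforms naturally: if $\omega_n=(\psi_n)_*\omega$, then for solid $B$ one has $\mathrm{area}_\omega(\psi_n^{-1}B)=\omega_n(B)$ and $\psi_n^{-1}B$ is again solid, so the median quasi-measure of $\omega$ on the level sets of $F_*\circ\psi_n$ equals $\mathbf 1[\omega_n(\cdot)\ge\tfrac12]$; equivalently $\zeta(F_*\circ\psi_n)=\zeta_{\omega_n}(F_*)$, where $\zeta_{\omega_n}$ is the median quasi-state of $\omega_n$. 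I would then choose $\psi_n$ so that $\omega_n$ concentrates mass $\tfrac13$ near each $p_i$ (for instance by a radial squeezing sending the bulk of the $\omega$-area into shrinking neighbourhoods of the three points). A solid set then has $\omega_n$-measure $\ge\tfrac12$ precisely when it captures at least two of the three points in the limit, so $\zeta_{\omega_n}\to\zeta_3$ on the fixed functions $F_*,G_*,F_*+G_*$.

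Combining the two observations, $\Pi(F_n,G_n)=|\zeta_{\omega_n}(F_*+G_*)-\zeta_{\omega_n}(F_*)-\zeta_{\omega_n}(G_*)|\to\Pi_{\zeta_3}(F_*,G_*)$ while the denominator remains $\Vert\{F_*,G_*\}\Vert_{L_1}$, so the ratio tends to $\Pi_{\zeta_3}(F_*,G_*)^2/\Vert\{F_*,G_*\}\Vert_{L_1}=1$. Because a diffeomorphism can only concentrate the area in the limit and never produces the atomic measure $\tfrac13(\delta_{p_1}+\delta_{p_2}+\delta_{p_3})$, the value $1$ is approached but not attained, which is exactly why the statement asserts a supremum rather than the maximum of Theorem \ref{thm:3pnt_max}.

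The main obstacle I expect is the convergence $\zeta_{\omega_n}(F_*)\to\zeta_3(F_*)$ (and likewise for $G_*$ and $F_*+G_*$). One must show that, as $\omega_n$ concentrates, the thresholds $\mathbf 1[\omega_n(\{F_*<x\})\ge\tfrac12]$ converge for almost every $x$ to the $3$-point value, which requires controlling the level sets $\{F_*<x\}$ whose boundary sweeps across the shrinking neighbourhoods of the $p_i$, and then passing to the limit inside the integral formula $\zeta(F)=\max F-\int b_F$. The cleanest route is to arrange the concentration away from the finitely many critical values of $F_*,G_*,F_*+G_*$ and to build $\psi_n$ explicitly so that $\omega_n(\{F_*<x\})$ converges monotonically; the remaining technical points are the explicit construction of the squeezing diffeomorphisms and the verification that the quasi-measure \emph{extensions} (not merely their values on solid sets) converge, so that the integral formula for $\zeta$ may be applied throughout.
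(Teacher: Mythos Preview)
Your route is genuinely different from the paper's. The paper does not invoke Theorem~\ref{thm:3pnt_max} at all. Instead, for each small $\epsilon>0$ it builds an explicit model: take the triangle with vertices $(0,0),(0,1),(1,0)$, smooth its corners to a region $U$, and draw the three segments $\{x=\epsilon\}$, $\{y=\epsilon\}$, $\{x+y=1-\epsilon\}$, cutting $U$ into seven pieces $U_1,\dots,U_7$. Doubling $U$ along its boundary gives a topological sphere $S$ on which $F=x$ and $G=y$ are smooth, and one then \emph{prescribes} a normalized area form $\sigma$ on $S$ assigning mass $\tfrac{2}{10}$ to the preimage of each corner piece and $\tfrac{1}{10}$ to each of the remaining four. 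With these weights each of the three drawn segments is literally the median of $F$, $G$, or $F+G$, so $\zeta(F)=\zeta(G)=\epsilon$, $\zeta(F+G)=1-\epsilon$, while $\Vert\{F,G\}\Vert_{L_1}=2\,\mathrm{Area}(U)<1$. A diffeomorphism $S\to S^2$ followed by Moser's theorem transports everything to the given $(S^2,\omega)$. No convergence of quasi-states is needed; every quantity is read off directly.

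Your strategy---fix the optimal pair $(F_*,G_*)$ from Theorem~\ref{thm:3pnt_max} and push $\omega$ toward $\tfrac13(\delta_{p_1}+\delta_{p_2}+\delta_{p_3})$ by diffeomorphisms---is morally the same picture (mass near three corners), and your reductions in the first two observations are correct and match Remark~\ref{rem:med_rat}. But the step you yourself flag as the ``main obstacle'' is essentially the entire proof, and it is not as soft as a weak-$*$ argument. To get $\zeta_{\omega_n}(H)\to\zeta_3(H)$ for $H\in\{F_*,G_*,F_*+G_*\}$ you must check that the sublevel sets $\{H<t\}$ are solid for a.e.\ $t$ (so the median quasi-measure reduces to the area threshold), and you must quantify the threshold: for $H=F_*+G_*$ the set $\{H\ge t\}$ collapses to a one-dimensional arc as $t\uparrow 1$, so no smooth $\omega_n$ gives it area $\ge\tfrac12$ for all $t<1$, and you need to show the crossover level tends to $1$. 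There is also a positioning issue you have not addressed: in the construction of Theorem~\ref{thm:3pnt_max} the points $p_1,p_2,p_3$ sit \emph{on the boundaries} of the half-spheres $\{x\le 0\}$ and $\{y\le 0\}$ used to compute $\zeta_3(F_*)$ and $\zeta_3(G_*)$, so a symmetric bump at $p_i$ does not automatically land $\ge\tfrac12$ of the mass on the correct side. None of this is fatal, but by the time these details are pinned down you will have done at least as much work as the paper's direct construction, which simply chooses the area form so that the medians are visible by inspection.
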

\section{Proofs}
\subsection{Proof of theorem \ref{thm:3pnt_max}}
\paragraph*{}
Prior to proving theorem \ref{thm:3pnt_max} we shall pay attention to the fact that any result we can prove for a certain 3-point quasi-state is true for all such quasi-states.
\begin{rem} \label{rem:3pnt_rat}
Let $\left\{ p_1,p_2,p_3 \right\}$ and $\left\{q_1,q_2,q_3 \right\}$ be two sets of three distinct points on the sphere $S^2$, and take $\zeta_1$ and $\zeta_2$ to be the two corresponding Aarnes' 3-point quasi-states and $\Pi_1$ and $\Pi_2$ the corresponding measurements of their non-linearity.
By a corollary to the isotopy lemma (see \cite{GP}, 3.6) there exists a diffeomorphism $h : S^2 \rightarrow S^2$ satisfying: 
\[h(p_i) = q_i \ , \ 1 \leq i \leq 3 \ . \]
Since $h$ is a diffeomorphism, both $h$ and $h^{-1}$ take solid subsets of the sphere to solid subsets, thus $\zeta_2(F \circ h) = \zeta_1(F)$ for every function $F \in C(S^2)$. Which yields: 
\[ \Pi_1 (F, G) = \Pi_2 (F \circ h, G \circ h) \ . \]
Also, we have: 
\begin{multline*}
\Vert \left\{ F \circ h, G \circ h \right\} \Vert_{L_1} = 
\int_{S^2} \vert d(F \circ h) \wedge d(G \circ h) \vert = 
\int_{S^2} \vert h^{\ast} \left( dF \wedge dG \right) \vert = \\
\int_{h(S^2)} \vert dF \wedge dG \vert = 
\int_{S^2} \vert dF \wedge dG \vert = 
\Vert \left\{ F , G \right\} \Vert_{L_1} \ . 
\end{multline*}
Thus: 
\[ \dfrac{\Pi_2 (F \circ h, G \circ h)^2}{\Vert \left\{ F \circ h , G \circ h \right\} \Vert_{L_1}} = \dfrac{\Pi_1 (F, G)^2}{\Vert \left\{ F,G \right\} \Vert_{L_1}} \ . \]
\end{rem}
\paragraph*{}
Based on this result we can prove the following theorem for a certain 3-point quasi-state and conclude that it is true for all such quasi-states.
\paragraph*{Proof of theorem \ref{thm:3pnt_max}}
\begin{proof}{}
Define 	$S^2 = \left\{ (x,y,z) \in \mathbb{R}^3 \ : \ x^2 + y^2 + z^2 = 1 \right\} \subset \mathbb{R}^3$. In spherical coordinates we have: \[S^2 = \left\{ (\cos{\theta} \cos{\phi} , \cos{\theta} \sin{\phi}, \sin{\theta} ) \ : \ -\frac{\pi}{2} \leq \theta \leq \frac{\pi}{2} \mbox{ and } 0 \leq \phi \leq 2 \pi \right\} \ . \]
Consider the following points on $S^2$: 
\[ \begin{array}{lcr} p_1 & = & (1,0,0) \\ 
					p_2 & = & (0,1,0) \\
					p_3 & = & (0,0,1) \end{array} \ . \]
Let $\zeta$ and $\tau$ be Aarnes' 3-point quasi-state and quasi-measure corresponding to these points.
\subparagraph*{}
Denote:
\[\begin{array}{l} 
\mathcal{D} = \left\{ (x,y) : x^2+y^2 \leq 1 \right\} \\
\Delta = \left\{ (u,v) \in \mathbb{R}^2 : u,v > 0 \mbox{ and } u+v < 1 \right\} \end{array} \ .\]
We build a continuous function $\widehat{\psi} : \mathcal{D} \rightarrow cl(\Delta)$ (see the figure below) satisfying : 
\begin{itemize}
\item $\widehat{\psi}$ maps the first quarter homeomorphically to $\Delta$ along the radii.
\item $\widehat{\psi}$ maps the second quarter to the segment $\left\{0\right\} \times [0,1]$ of the $y$-axis.
\item $\widehat{\psi}$ maps the third quarter to the origin $(0,0)$.
\item $\widehat{\psi}$ maps the fourth quarter to the segment $[0,1] \times \left\{0\right\}$ of the $x$-axis.
\end{itemize}
\begin{center}
\begin{tabular}{c}
\parbox[r]{10em}{
\begin{tikzpicture}
\draw[->] (-3,0) -- (3,0) node [label=right:$x$] {};
\draw[->] (0,-3) -- (0,3) node [label=above:$y$] {};

\draw [style=very thick] (0,0) circle (2);

\draw[->] (1.75, 0.8) -- (1.425, 0.625);
\draw[->] (1.35, 1.35) -- (1.05, 1.05);
\draw[->] (0.8, 1.75) -- (0.625, 1.425);

\draw[->] (-1.7, -0.5) -- (-0.4, -0.1);
\draw[->] (-1.25, -1.25) -- (-0.25, -0.25);
\draw[->] (-0.5, -1.7) -- (-0.1, -0.4);

\draw[->] (-1.8, 0.3) -- (-0.1, 0.3);
\draw[->] (-1.6, 0.7) -- (-0.1, 0.7);
\draw[->] (-1.3, 1.1) -- (-0.1, 1.1);
\draw[->] (-1.0, 1.5) -- (-0.1, 1.5);

\draw[->] (0.3, -1.8) -- (0.3, -0.1);
\draw[->] (0.7, -1.6) -- (0.7, -0.1);
\draw[->] (1.1, -1.3) -- (1.1, -0.1);
\draw[->] (1.5, -1.0) -- (1.5, -0.1);

\coordinate (O) at (0,0);
\coordinate [label=-45:$1$] (B) at (2,0);
\coordinate [label=135:$1$] (C) at (0,2);

\draw [style=very thick] (O) -- (B);
\draw [style=very thick] (O) -- (C);
\draw [style=very thick] (B) -- (C);

\draw (0.65,0.65) node {\begin{Large}$\Delta$\end{Large}};
\draw (2.3,2.3) node{I};
\draw (-2.3,2.3) node{II};
\draw (-2.3,-2.3) node{III};
\draw (2.3,-2.3) node{IV};

\end{tikzpicture}} \\
\begin{large}
$\qquad\qquad\widehat{\psi}(\mathcal{D})$
\end{large} 

\end{tabular}
\end{center}
We now build a smooth function $\psi : \mathcal{D} \rightarrow cl(\Delta)$ by smoothening $\widehat{\psi}$. 
For the precise definition of $\psi$ we will need an auxiliary smooth function $\alpha : \mathbb{R} \rightarrow [0,1]$ satisfying: 
\begin{center}
\begin{tabular}{lr}
\begin{tabular}{l}
$\alpha(s) = 0 \ , \ \forall s \leq 0$ \\
$\alpha(s) = 1 \ , \ \forall s \geq 1$ \\
$\alpha'(s) > 0 \ , \ \forall s \in (0,1)$.
\end{tabular} & 
\parbox[r]{10em}{
\begin{tikzpicture}
\draw[->] (-1,0) -- (2,0) node [label=right:$s$] {};
\draw[->] (0,-1) -- (0,2) node {};

\coordinate [label=225:$0$] (O) at (0,0);
\coordinate (A) at (-1,0);
\coordinate [label=below:$1$] (B) at (1,0);
\coordinate [label=left:$1$] (C) at (0,1);
\coordinate (D) at (1,1);

\draw [style=very thick] (A) -- (O);
\draw [style=very thick] (O) .. controls (0.1, 0) and (0.2, 0) .. (0.5, 0.8);
\draw [style=very thick] (0.5, 0.8) .. controls (0.55, 0.9) and (0.6, 1) .. (1,1);
\draw [style=very thick] (1,1) -- (2,1);
\draw [style=dashed] (B) -- (D);
\draw [style=dashed] (C) -- (D);

\draw (1.5,1.5) node {$\alpha(s)$};

\end{tikzpicture}}
\end{tabular}
\end{center}
Then we can define:
\[\rho(x,y) = \alpha \left( 2x^2+2y^2 - 1 \right) \cdot \alpha \left( \dfrac{x+y}{\sqrt{x^2+y^2}} \right) \ , \ \forall (x,y) \in \mathcal{D} \setminus \left\{ (0,0) \right\} \ ,\]
the images below illustrate the behaviour of this function. 
\begin{center}
\begin{tabular}{ccc}
\includegraphics[scale=0.24]{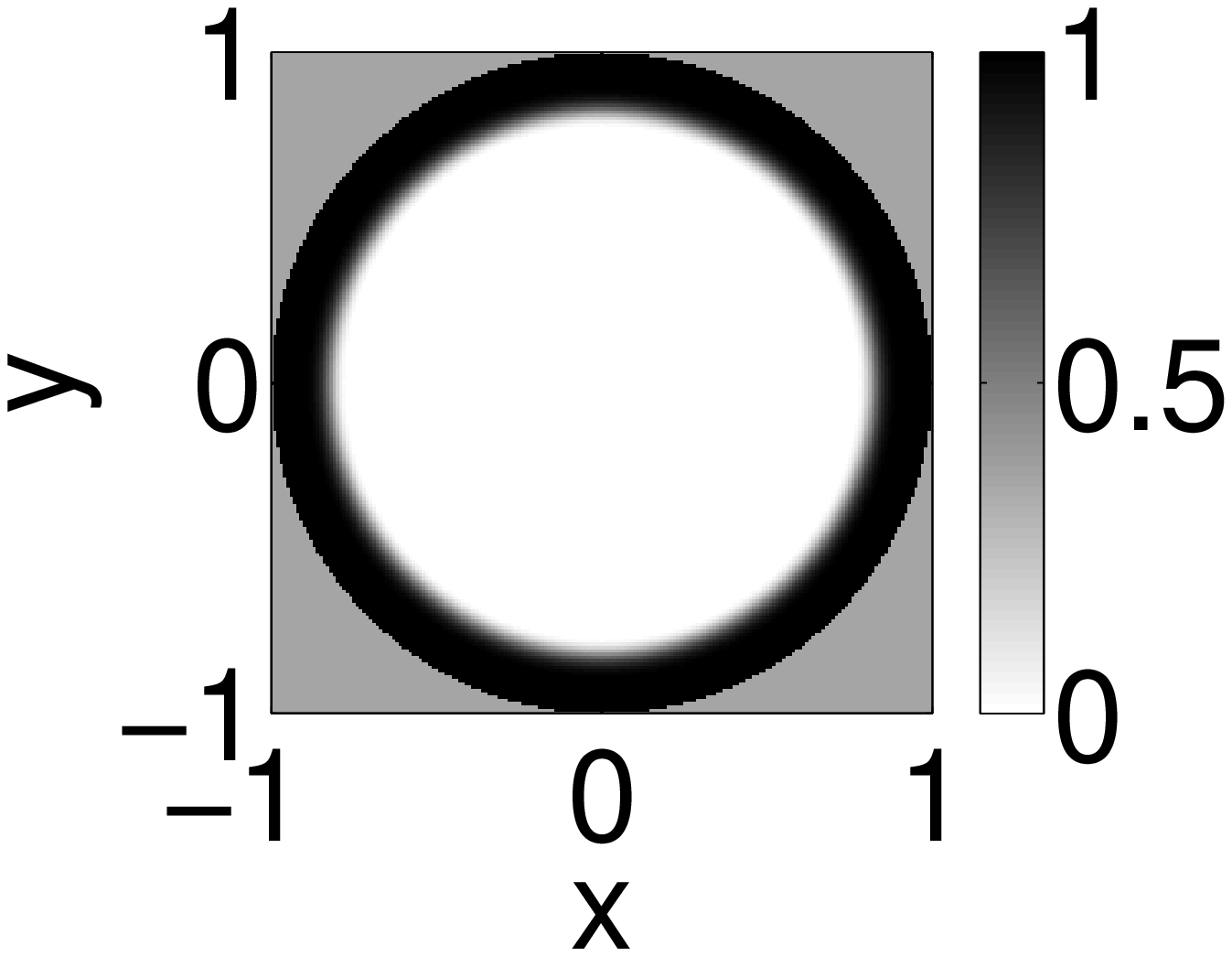} &
\includegraphics[scale=0.24]{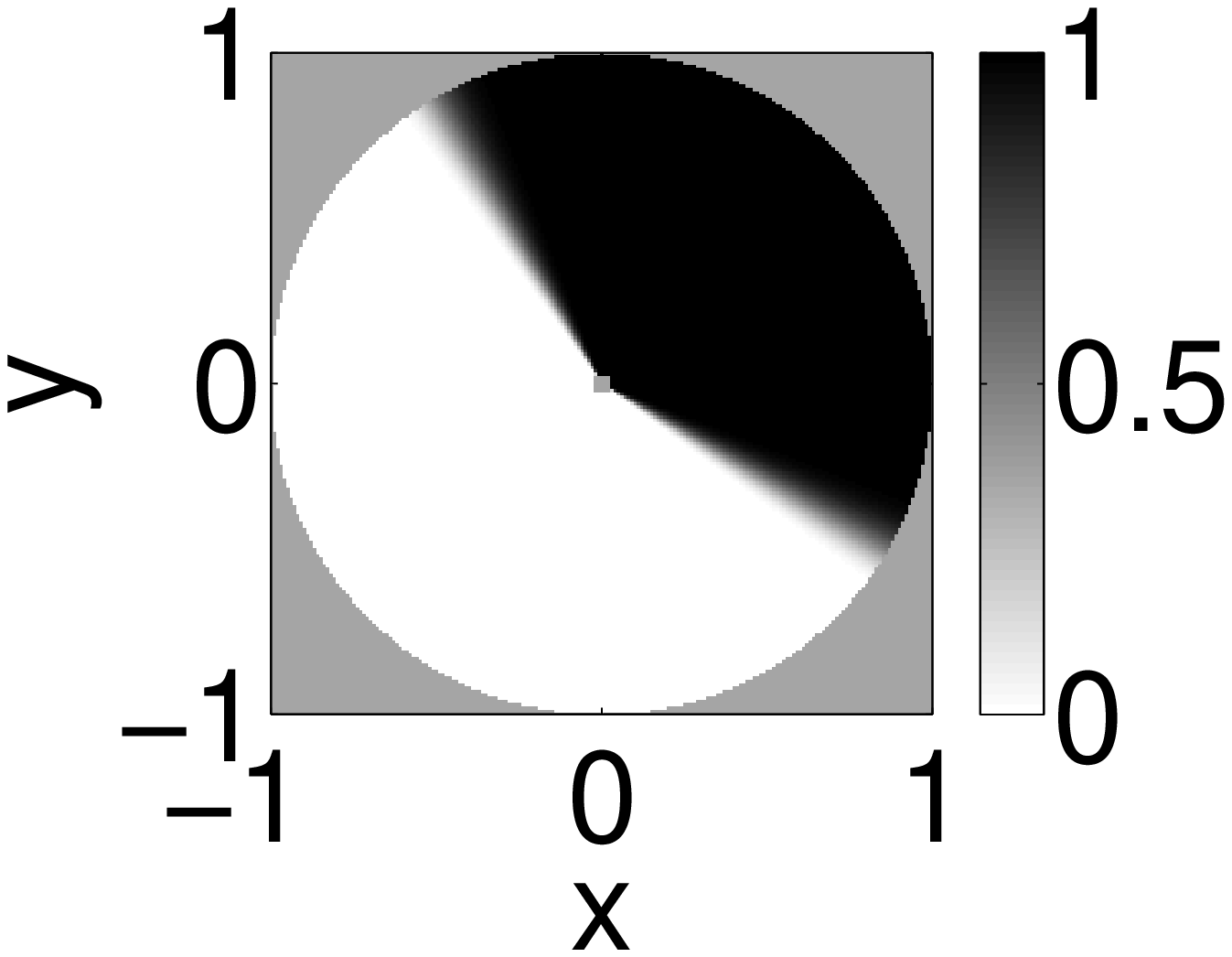} & 
\includegraphics[scale=0.24]{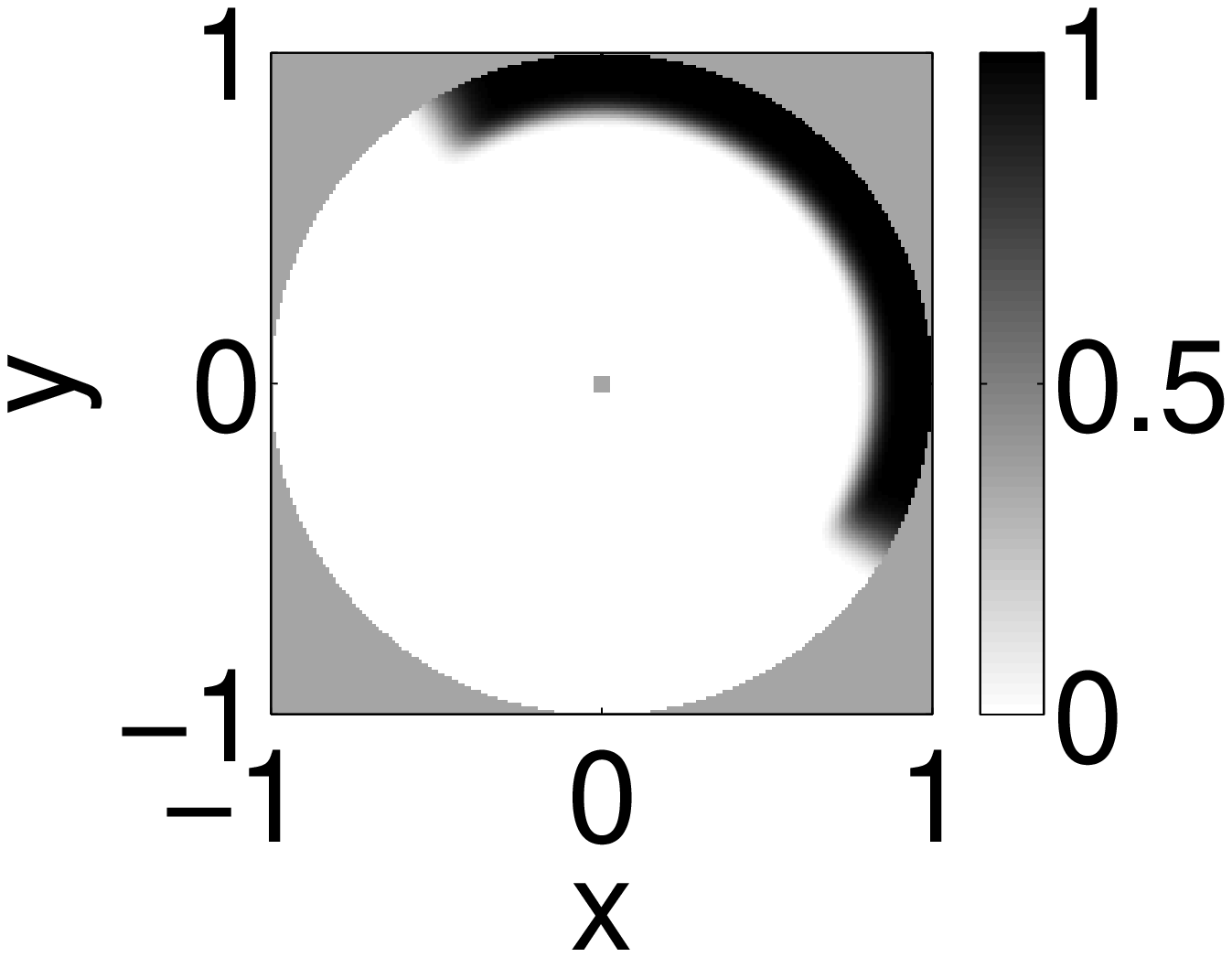}\\
$\alpha(2x^2+2y^2-1)$ & 
$\alpha\left(\frac{x+y}{\sqrt{x^2+y^2}}\right)$ & 
$\rho(x,y)$ \\
\end{tabular}
\end{center}
And we take $\psi : \mathcal{D} \rightarrow cl(\Delta)$ to be $\psi(p) = (f(p), g(p))$, whereas:  
\[ \begin{array}{lcr} 	
f(x,y) &=& 
\left\{ \begin{array} {lcl}
x \leq 0 & , & 0 \\
0 < x & , & 
\rho(x,y) \cdot 
\dfrac{ \alpha\left(\frac{x}{\sqrt{x^2+y^2}}\right)}{ \alpha\left(\frac{x}{\sqrt{x^2+y^2}}\right) + \alpha\left(\frac{y}{\sqrt{x^2+y^2}}\right)} \end{array} \right. \\
g(x,y) &=& 
\left\{ \begin{array} {lcl}
y \leq 0 & , & 0 \\
0 < y & , & 
\rho(x,y) \cdot 
\dfrac{ \alpha\left(\frac{y}{\sqrt{x^2+y^2}}\right)}{ \alpha\left(\frac{x}{\sqrt{x^2+y^2}}\right) + \alpha\left(\frac{y}{\sqrt{x^2+y^2}}\right)} \end{array} \right.
\end{array} \ .\]
\begin{lem} \label{lem:smooth} 
$f,g$ are smooth functions.
\end{lem}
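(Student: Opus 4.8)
The plan is to verify smoothness of $f$ region by region, the function $g$ then following for free from the identity $g(x,y)=f(y,x)$. This identity holds because $\rho$, the argument $2x^2+2y^2-1$, and $\frac{x+y}{\sqrt{x^2+y^2}}$ are all symmetric in $x$ and $y$, while the branches of $f$ and $g$ switch along $\{x=0\}$ and $\{y=0\}$ respectively; composing with the linear swap $(x,y)\mapsto(y,x)$ transports smoothness of $f$ to $g$. The key structural fact I would record at the outset is that $\alpha$ is flat at $0$: since $\alpha\equiv 0$ on $(-\infty,0]$, every derivative $\alpha^{(k)}(0)$ vanishes. Consequently, whenever a smooth function $u$ crosses $0$ transversally across a hypersurface while staying $\le 0$ on one side, the composition $\alpha\circ u$ is smooth and flat to infinite order along that hypersurface.

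First I would dispose of the origin together with a whole neighbourhood of it. The factor $\alpha(2x^2+2y^2-1)$ is smooth on all of $\mathbb{R}^2$ and vanishes identically on the disc $\{x^2+y^2\le\tfrac12\}$, since there $2x^2+2y^2-1\le 0$. Hence $\rho$, and with it both branches of $f$ (and of $g$), vanish identically on this disc. In particular $f\equiv 0$ near the origin, so there is nothing to check there; this is also precisely what lets us ignore that $\rho$ was only defined on $\mathcal{D}\setminus\{(0,0)\}$.

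Next I would treat the open half-plane $\{x>0\}$, where the nontrivial branch of $f$ is in force. There $r:=\sqrt{x^2+y^2}>0$, so $\frac{x}{r},\frac{y}{r},\frac{x+y}{r}$ and $2r^2-1$ are all smooth, and the denominator satisfies $\alpha(\frac{x}{r})+\alpha(\frac{y}{r})\ge\alpha(\frac{x}{r})>0$ because $x>0$ forces $\frac{x}{r}>0$. Thus $f$ is a product and quotient of smooth functions with nowhere-vanishing denominator, hence smooth on $\{x>0\}$ (up to and including the boundary circle $r=1$, where $r>0$ keeps every formula smooth); on $\{x<0\}$ it is identically $0$, hence smooth.

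The main obstacle, and really the only genuine point, is the $C^\infty$-gluing across the switching line $\{x=0\}$ with $y\ne 0$ (the origin being already handled). For $(0,y_0)$ with $y_0<0$ I would observe that $x+y<0$ nearby, so $\alpha(\frac{x+y}{r})=0$ and $f\equiv 0$ in a neighbourhood. For $(0,y_0)$ with $y_0>0$ the idea is to realize $f$, near this point, as the restriction of a single function smooth on a full two-sided neighbourhood. Set $A(x,y):=\alpha(\frac{x}{r})$, which is smooth near $(0,y_0)$ since $r>0$, and which vanishes on $\{x\le 0\}$ because $\frac{x}{r}\le 0$ there; being identically $0$ on the open set $\{x<0\}$, it is flat to infinite order along $\{x=0\}$. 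Then $F:=\rho\cdot\frac{A}{A+\alpha(y/r)}$ is smooth on a neighbourhood of $(0,y_0)$, its denominator equalling $1$ at $(0,y_0)$ and therefore staying positive nearby. Since $F=f$ on $\{x>0\}$ and $F=0=f$ on $\{x\le 0\}$ (where $A=0$), we conclude $f=F$ near $(0,y_0)$ and hence $f$ is smooth there. I expect the care to be needed exactly in checking that the denominator stays bounded away from $0$ and that $A$ is genuinely flat, not merely continuous, across $\{x=0\}$; once those are in hand, everything reduces to the product and quotient rules.
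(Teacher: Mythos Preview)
Your argument is correct and follows essentially the same case decomposition as the paper's proof: kill a neighbourhood of the origin via the factor $\alpha(2r^2-1)$, handle $\{x>0\}$ and $\{x<0\}$ directly, and treat the seam $\{x=0,\,y\ne 0\}$ according to the sign of $y$. Your treatment of the delicate point $(0,y_0)$ with $y_0>0$---observing that $A=\alpha(x/r)$ is already smooth on a full two-sided neighbourhood and vanishes on $\{x\le 0\}$, so that $f$ coincides there with the single smooth function $\rho A/(A+\alpha(y/r))$---is in fact tidier than the paper's version (which argues by taking limits of derivatives from the right), and the symmetry reduction $g(x,y)=f(y,x)$ is a small improvement the paper does not make explicit.
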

\begin{proof}
The proofs of smoothness for $f$ and $g$ are very similar, therefore we will give the proof only for $f$. To show that $f$ is smooth, we need to show that it is smooth on every point of its domain. Take a point $(x_0, y_0) \in \mathcal{D}$,  and consider the following cases: 
\begin{itemize}
\item If $x_0 < 0$, then $f$ is identically zero in a neighbourhood of $x_0$, and hence smooth.
\item If $x_0 > 0$, then $x>0$ in a neighbourhood of $x_0$, hence $\sqrt{x^2+y^2} > 0$ and $f$ is a multiplication of smooth functions divided by smooth $\alpha\left(\frac{x}{\sqrt{x^2+y^2}}\right) + \alpha\left(\frac{y}{\sqrt{x^2+y^2}}\right)$. But, since $x > 0$, $\alpha\left(\frac{x}{\sqrt{x^2+y^2}}\right) > 0$, therefore the denominator $\alpha\left(\frac{x}{\sqrt{x^2+y^2}}\right) + \alpha\left(\frac{y}{\sqrt{x^2+y^2}}\right) > 0$, and $f$ is smooth.
\item If $x_0 = 0$ and $y_0 < 0$, we can find a neighbourhood $U$ of $(x_0, y_0)$ on which $y < 0$ and $x+y \leq 0$. But then in this neighbourhood we have $x^2+y^2>0$ and $\frac{x+y}{\sqrt{x^2+y^2}} \leq 0$, hence $\alpha\left(\frac{x+y}{\sqrt{x^2+y^2}}\right) = 0 \Rightarrow \rho(x,y) = 0$, which yields:
\[ f(x,y) \vert_U = \left\{ \begin{array}{lcl} 
x > 0 & , & 0 \cdot \dfrac{\alpha\left(\frac{x}{\sqrt{x^2+y^2}}\right)}{\alpha\left(\frac{x}{\sqrt{x^2+y^2}}\right) + \alpha\left(\frac{y}{\sqrt{x^2+y^2}}\right)} = 0 \\
x \leq 0 & , & 0 \end{array} \right. \ . \]
Thus $f$ is identically zero in this neighbourhood, and hence smooth.
\item If $x_0 = 0$ and $y_0 > 0$, we can find a neighbourhood of $y_0$ such that $y>0$, thus $\alpha\left(\frac{y}{\sqrt{x^2+y^2}}\right) > 0 $. In this neighbourhood the denominator, $\alpha\left(\frac{x}{\sqrt{x^2+y^2}}\right) + \alpha\left(\frac{y}{\sqrt{x^2+y^2}}\right) > 0$, thus $f$ will be the multiplication of smooth functions for $x > 0$ and zero for $x \leq 0$. From $\alpha$'s smoothness we have $\lim_{s \rightarrow 0} \alpha^{(m)} (s) = 0$ for every derivative $m \in \mathbb{N}$. If $x>0$, every derivative of $f$ will be a finite sum of products, each of which has a multiplicand of the form $\alpha^{(m)}\left(\frac{x}{\sqrt{x^2+y^2}}\right)$ for some $m \in \mathbb{N}$, therefore: 
\[\lim_{(x,y) \rightarrow (0, y_0)} f^{(n)} (x,y) = 0 \ , \ \forall n \in \mathbb{N} \ , \]
and $f$ is smooth.
\item Finally, if $x_0 = 0$ and $y_0 = 0$, we can find a neighbourhood of $(x_0,y_0)$ on which we have $x^2+y^2< \frac{1}{2}$. But then: 
$\alpha(2x^2 + 2y^2 - 1) = 0 \Rightarrow \rho(x,y) = 0$, 
and hence $f$ is identically zero in this neighbourhood, thus smooth.
\end{itemize}
We have shown that $f$ is smooth on every point of $\mathcal{D}$, thus $f$ is a smooth function. In a similar manner it can be shown that $g$ is also smooth. 
\end{proof}
Denote:
\[A = \left\{ (x,y) \in \mathcal{D} : x,y > 0 \mbox{ and } \frac{1}{2} < x^2+y^2 < 1 \right\} \ .\]
\begin{lem} \label{lem:bijection}
The restriction $\psi \vert_{A}$ is one-to-one and onto $\Delta$.
Also, $\psi(\mathcal{D} \setminus A) \subset \partial \Delta$.
\end{lem}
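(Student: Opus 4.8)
The plan is to reduce $\psi|_A$ to a pair of one-variable monotone bijections by passing to polar coordinates. First I would observe that on $A$ one sits entirely in the open first quadrant, so $r := \sqrt{x^2+y^2} > 0$ and both $f,g$ are given by their nonzero branches. Since $x,y>0$ forces $(x+y)^2 = x^2+y^2+2xy \geq x^2+y^2$, we get $\frac{x+y}{\sqrt{x^2+y^2}} \geq 1$, hence $\alpha\!\left(\frac{x+y}{\sqrt{x^2+y^2}}\right) = 1$ and therefore $\rho(x,y) = \alpha(2r^2-1)$ throughout $A$. Writing $(x,y) = (r\cos t, r\sin t)$ with $t \in (0,\tfrac{\pi}{2})$ and $r \in (\tfrac{1}{\sqrt 2}, 1)$, the formulas for $f,g$ collapse to
\[
f = \alpha(2r^2-1)\cdot\frac{\alpha(\cos t)}{\alpha(\cos t)+\alpha(\sin t)}, \qquad g = \alpha(2r^2-1)\cdot\frac{\alpha(\sin t)}{\alpha(\cos t)+\alpha(\sin t)},
\]
so that $f+g = \alpha(2r^2-1)$ and $\frac{f}{g} = \frac{\alpha(\cos t)}{\alpha(\sin t)}$.

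Next I would isolate the two monotonicity statements on which everything rests. The radial map $\sigma(r) := \alpha(2r^2-1)$ is a strictly increasing continuous bijection from $(\tfrac{1}{\sqrt2},1)$ onto $(0,1)$, because $\alpha$ restricts to a strictly increasing bijection of $(0,1)$ onto itself and $r \mapsto 2r^2-1$ carries $(\tfrac1{\sqrt2},1)$ onto $(0,1)$. The angular map $\lambda(t) := \frac{\alpha(\cos t)}{\alpha(\sin t)}$ is a strictly decreasing continuous bijection from $(0,\tfrac{\pi}{2})$ onto $(0,\infty)$: on $(0,\tfrac{\pi}{2})$ the numerator $\alpha(\cos t)$ is strictly decreasing and positive while the denominator $\alpha(\sin t)$ is strictly increasing and positive (both from $\alpha' > 0$ on $(0,1)$), so the quotient is strictly decreasing; moreover $\lambda(t) \to +\infty$ as $t \to 0^+$ and $\lambda(t) \to 0$ as $t \to \tfrac{\pi}{2}^-$, since $\alpha(\sin t) \to 0$ and $\alpha(\cos t) \to 0$ respectively. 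Establishing this angular monotonicity is the real crux of the argument; the radial part is comparatively immediate.

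With these two bijections in hand the bijectivity of $\psi|_A$ is bookkeeping. Given a target $(u_0,v_0) \in \Delta$ I would set $\sigma_0 = u_0+v_0 \in (0,1)$ and $\lambda_0 = u_0/v_0 \in (0,\infty)$; the radial bijection supplies a unique $r_0 \in (\tfrac1{\sqrt2},1)$ with $\alpha(2r_0^2-1) = \sigma_0$, and the angular bijection a unique $t_0 \in (0,\tfrac{\pi}{2})$ with $\lambda(t_0) = \lambda_0$, whence $(r_0\cos t_0, r_0\sin t_0) \in A$ is the unique preimage. Injectivity follows in the same way: equal images force equal values of $f+g$ and of $f/g$, hence, by injectivity of $\sigma$ and $\lambda$, equal $r$ and $t$, i.e.\ the same point.

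Finally, for $\psi(\mathcal{D}\setminus A) \subset \partial\Delta$ I would run through the complementary regions and check that each lands on an edge of the triangle $\{u=0\}\cup\{v=0\}\cup\{u+v=1\}$: if $x \leq 0$ then $f=0$; if $y \leq 0$ then $g=0$; if $x^2+y^2 \leq \tfrac12$ then $2r^2-1 \leq 0$ gives $\rho = 0$ and $\psi = (0,0)$; and if $x,y>0$ with $x^2+y^2 = 1$ then $\rho = \alpha(1) = 1$, so $f+g = 1$. Since within $\mathcal{D}$ these four cases exhaust $\mathcal{D}\setminus A$, the image would lie in $\partial\Delta$, which completes the argument.
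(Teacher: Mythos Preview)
Your proof is correct and follows essentially the same route as the paper: pass to polar coordinates on $A$, reduce $\psi$ to the pair $(f+g,\,f/g)$ which separates into a radial bijection $\alpha(2r^2-1)$ onto $(0,1)$ and an angular bijection $\alpha(\cos t)/\alpha(\sin t)$ onto $(0,\infty)$, then handle $\mathcal{D}\setminus A$ by the same four-case split. The only cosmetic difference is that the paper verifies the angular monotonicity by computing the derivative explicitly, whereas you argue directly from monotone numerator over monotone denominator plus the endpoint limits; both are fine.
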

\begin{proof}
On $A$ we have $x,y>0$, thus $x+y > \sqrt{x^2+y^2}$ and $\alpha\left(\frac{x+y}{\sqrt{x^2+y^2}}\right) = 1$. Hence: \[\left. \left( f+g \right) \right|_A = \rho \vert_{A} = \alpha(2x^2+2y^2-1) \ .\]
Similarly:
\[\left.\left( \frac{f}{g} \right) \right|_A = \dfrac{\alpha\left(\frac{x}{\sqrt{x^2+y^2}}\right)}{\alpha\left(\frac{y}{\sqrt{x^2+y^2}}\right)} \ . \]
In spherical coordinates we have: 
\[A = \left\{(\cos{\theta}\cos{\phi}, \cos{\theta}\sin{\phi}) : 0 < \phi < \frac{\pi}{2} \mbox{ and } 0 < \theta < \frac{\pi}{4} \right\} \ . \]
Therefore: 
\[\left.\left(f+g\right) \right|_A = \alpha(2cos^2{\theta}-1) \mbox{ and } 
\left.\left( \frac{f}{g} \right) \right|_A = \dfrac{\alpha(\cos{\phi})}{\alpha(\sin{\phi})} \ . \]
Note that $\alpha$ is a bijection of $(0,1)$ to $(0,1)$ and $\left(2\cos^2{\theta}-1\right)$ is a bijection of $\left(0, \frac{\pi}{4}\right)$ to $(0,1)$, hence $\left( f+g \right) \vert_A = \alpha(\cos^2{\theta}-1)$ is a bijection of $\left(0, \frac{\pi}{4}\right)$ to $(0,1)$.
Also: 
\begin{multline*}
\dfrac{d \left. \left( \frac{f}{g} \right) \right|_{A}}{d \phi} = 
\dfrac{ d \left( \frac{\alpha(\cos{\phi})}{\alpha(\sin{\phi})} \right)}{d \phi} = \\
-\dfrac{\alpha'(\cos{\phi}) \cdot \alpha(\sin{\phi}) \cdot sin{\phi} + \alpha(\cos{\phi}) \cdot \alpha'(\sin{\phi}) \cdot \cos{\phi}}{\alpha^2(\sin{\phi})} 
\end{multline*}
Recall that $\alpha(s), \alpha'(s) > 0$ for $s \in (0,1)$ and that on $A$ we have $0< \cos{\phi}, \sin{\phi} < 1$, therefore: $\dfrac{d \left. \left( \frac{f}{g} \right) \right|_{A}}{d \phi} < 0$, and $\left. \left( \frac{f}{g} \right) \right|_A$ is a bijection of $\left(0, \frac{\pi}{2} \right)$ to $(0, \infty)$.
\subparagraph*{}
We have shown that $\left.\left(f+g, \frac{f}{g}\right)\right|_A$ is a bijection of $A$ to $(0,1) \times (0,\infty)$. Since $\left(u+v, \frac{u}{v}\right)$ is a bijection of $\Delta$ to $(0,1) \times (0,\infty)$, $\psi \vert_A$ is a bijection of $A$ to $\Delta$.
\subparagraph*{}
We still have to show that $\psi(\mathcal{D} \setminus A) \subset \partial \Delta$.
Note that a point $(x,y) \in \mathcal{D} \setminus A$ satisfies at-least one of these four conditions:
\begin{itemize}
\item $x \leq 0$ \\
In this case we have $f(x,y) = 0$ and $\psi(x,y) = (0, g(x,y)) \in \partial \Delta$.
\item $y \leq 0$ \\
Similarly $g(x,y) = 0$ and $\psi(x,y) = (f(x,y), 0) \in \partial \Delta$.
\item $x^2+y^2 \leq \frac{1}{2}$ \\
Here $\rho(x,y) = \alpha(2x^2+2y^2-1) \cdot \alpha(\frac{x+y}{\sqrt{x^2+y^2}}) = 0$ and $\psi(x,y) = (0,0) \in \partial \Delta$.
\item $x,y > 0$ and $x^2+y^2 = 1$ \\
Here $\rho(x,y) = \alpha(2x^2+2y^2-1) \cdot \alpha\left(\frac{x+y}{\sqrt{x^2+y^2}}\right) = 1$. hence $f(x,y) + g(x,y) = 1$ and $\psi(x,y) = (f(x,y), g(x,y)) \in \partial \Delta$.
\end{itemize}
Thus we have shown that $\psi(\mathcal{D} \setminus A) \subset \partial \Delta$.
\end{proof}
Let $P:S^2 \rightarrow \mathbb{R}^2$ be the projection of the sphere to the $xy$-plane. Define: $F,G:S^2 \rightarrow \mathbb{R}$ by $F = f \circ P$ and $G = g \circ P$. 
Our goal is to show that: 
\[ \Pi^2(F,G) = \Vert \left\{ F, G \right\} \Vert_{L_1} \ . \]
We will begin by proving the following lemma:
\begin{lem} \label{lem:qs1}
\[\Pi(F,G) = 1 \ . \]
\end{lem}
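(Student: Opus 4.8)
The plan is to reduce the computation of $\zeta$ to a threshold (median) value and then read off the three numbers $\zeta(F)$, $\zeta(G)$, $\zeta(F+G)$ from the topology of the superlevel sets of $F$, $G$ and $F+G$. Since $\tau$ is a \emph{simple} quasi-measure it takes only the values $0$ and $1$, and for continuous $H$ the closed sets $\{H \ge c\}$ are nested. Substituting $b_H(x) = \tau(\{H<x\}) = 1 - \tau(\{H \ge x\})$ into the integral formula for $\zeta$ collapses it to
\[\zeta(H) = \sup\{c : \tau(\{H \ge c\}) = 1\},\]
the value at which $\tau(\{H\ge c\})$ jumps from $1$ to $0$. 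Thus it suffices to decide, for each threshold $c$, whether the closed set $\{H \ge c\}$ has quasi-measure $1$; and since all three functions take values in $[0,1]$, I only need to track $c \in (0,1]$. Here I will use the defining property of the $3$-point quasi-measure: a closed \emph{solid} set has $\tau = 1$ exactly when it contains at least two of $p_1, p_2, p_3$.

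Next I would assemble the elementary data. From the formulas for $f$ and $g$ one computes $F = f\circ P$ and $G = g \circ P$ at the projections $P(p_1) = (1,0)$, $P(p_2) = (0,1)$, $P(p_3) = (0,0)$, obtaining $F(p_1) = 1$, $G(p_2) = 1$, while $F$ vanishes at $p_2, p_3$ and $G$ vanishes at $p_1, p_3$. A short check shows $\{f = 1\}$ is the single point $(1,0)$ and $\{g=1\} = \{(0,1)\}$, whereas $\{f + g = 1\}$ is exactly the quarter of the unit circle $\{(\cos\phi,\sin\phi): 0 \le \phi \le \pi/2\}$. Pulling back by $P$, which is injective on the boundary circle $\partial\mathcal{D}$ (the equator), identifies $\{F+G = 1\}$ with the equatorial arc from $p_1$ to $p_2$. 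This arc is connected with connected complement, hence solid, and it contains the two points $p_1, p_2$, so $\tau(\{F+G = 1\}) = 1$; by monotonicity $\tau(\{F+G \ge c\}) = 1$ for all $c \le 1$, and therefore $\zeta(F+G) = 1$.

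The main work is to prove $\zeta(F) = \zeta(G) = 0$, i.e.\ that $\tau(\{F \ge c\}) = 0$ for every $c \in (0,1)$; by the $x \leftrightarrow y$ symmetry interchanging $f$ and $g$ (and $p_1$ and $p_2$) it is enough to treat $F$. Writing $\{F \ge c\} = P^{-1}(\{f \ge c\})$, I would establish the two topological facts that both $\{f \ge c\}$ and its complement $\{f < c\}$ are connected and meet $\partial\mathcal{D}$: using Lemma \ref{lem:bijection} and the explicit form of $f$, the superlevel set is a connected region hugging the boundary near $(1,0)$ and bounded away from $(0,0)$ and $(0,1)$, while its complement contains the whole half-disk $\{x \le 0\}$ and is likewise connected and boundary-meeting. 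Since $P^{-1}$ of a connected set meeting $\partial\mathcal{D}$ is connected (the two hemisphere sheets glue along the equator), both $\{F \ge c\}$ and $\{F<c\}$ are connected, so $\{F \ge c\}$ is solid; as it contains only $p_1$ among the three points, $\tau(\{F \ge c\}) = 0$, whence $\zeta(F) = 0$ and symmetrically $\zeta(G) = 0$. The delicate step — the one I expect to require the most care — is exactly this verification of connectedness of the level sets of $f$, since $f$ is defined piecewise and degenerates on three of the four quadrants; the homeomorphism $\psi|_A \cong \Delta$ from Lemma \ref{lem:bijection} is the natural tool for controlling it. Combining the three values then gives $\Pi(F,G) = |\zeta(F+G) - \zeta(F) - \zeta(G)| = |1 - 0 - 0| = 1$.
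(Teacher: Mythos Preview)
Your proof is correct, and for $\zeta(F+G)=1$ it is essentially the paper's argument (the solid equatorial arc through $p_1,p_2$). The difference is in how you obtain $\zeta(F)=\zeta(G)=0$. You attack the superlevel sets $\{F\ge c\}$ directly and must then verify that both $\{f\ge c\}$ and $\{f<c\}$ are connected and touch $\partial\mathcal D$; this is true but, as you anticipate, genuinely fiddly---$\{f\ge c\}$ spreads from $A$ across the segment $y=0$ into a sliver of the fourth quadrant where $f=\rho$, and one has to check all the pieces glue. The paper avoids this entirely by working on the \emph{sublevel} side and exploiting monotonicity of $\tau$: the closed half-sphere $\{x\le 0\}$ is manifestly solid, contains $p_2$ and $p_3$, hence has $\tau=1$, and lies inside $F^{-1}(0)\subset\{F<t\}$ for every $t>0$; so $b_F(t)=1$ for all $t>0$ without ever needing to know whether $\{F<t\}$ or $\{F\ge t\}$ is itself solid. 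Your threshold reformulation $\zeta(H)=\sup\{c:\tau(\{H\ge c\})=1\}$ is a clean way to organise the computation, but the paper's trick---exhibit a single solid \emph{witness} inside the set rather than prove solidity of the set---is the real shortcut and is worth adopting.
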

\begin{proof}
Note: 
\[ \begin{array}{lcr} 	(F,G)(p_1) &=& (1,0) \\
						(F,G)(p_2) &=& (0,1) \\
						(F,G)(p_3) &=& (0,0) \end{array} . \]
\subparagraph*{}
Since $p_2, p_3 \in \left\{ (x,y,z) \in S^2 : x \leq 0 \right\}$, and since the half-sphere is a solid subset of the sphere we have $\tau(\left\{ (x,y,z) \in S^2 : x \leq 0 \right\}) = 1$. Also: 
\[\left\{ (x,y,z) \in S^2 : x \leq 0 \right\} \subset F^{-1} (0) \subset \left\{ F < t \right\} \ , \ \forall t > 0 \ . \]
Therefore: 
\[b_F (t) = \tau (\left\{ F < t \right\} ) = 1 \ , \ \forall t > 0 \ . \]
\subparagraph*{}
In the same way we have $p_1, p_3 \in \left\{ (x,y,z) \in S^2 : y \leq 0 \right\}$, and as this half-sphere is also a solid subset, we get once more $\tau(\left\{ (x,y,z) \in S^2 : y \leq 0 \right\}) = 1$. As before: \[ \left\{ (x,y,z) \in S^2 : y \leq 0 \right\} \subset G^{-1} (0) \subset \left\{ G < t \right\} \ , \ \forall t > 0 \ . \]
Thus: 
\[b_G (t) = \tau (\left\{ G < t \right\} ) = 1 \ , \ \forall t > 0 \ . \]
\subparagraph*{}
Last it should be noted that the arc: 
\[\left\{ (x,y,0) \in S^2 : x,y \geq 0 \text{ and } x^2+y^2 = 1 \right\} \]
is also a solid subset of the sphere, and that:
\[p_1, p_2 \in \left\{ (x,y,0) \in S^2 : x,y \geq 0 \text{ and } x^2 + y^2 = 1 \right\} \ . \]
Therefore: 
\[\tau(\left\{ (x,y,0) \in S^2 : x,y \geq 0 \text{ and } x^2 + y^2 = 1 \right\}) = 1 \ . \]
Since: 
\[\left\{ (x,y,0) \in S^2 : x,y \geq 0 \text{ and } x^2 + y^2 = 1 \right\} \subset (F+G)^{-1} (1) \ , \]
we have: \[\tau((F+G)^{-1}(1)) = 1 \ . \]
Therefore, the quasi-measure of its complement $\left\{ F+G < 1 \right\}$ is $0$. For every $0 \leq t \leq 1$, $\left\{ F+G < t \right\}$ is a subset of $\left\{ F+G < 1 \right\}$, thus:
\[b_{F+G} (t) = \tau( \left\{ F+G < t \right\}) = 0 , \forall t \leq 1 \ . \]
\subparagraph*{}
Hence: 
\begin{eqnarray*}
\zeta(F) &=& 1 - \int_{0}^{1} b_F (t) dt = 1 - \int_{0}^{1} 1 dt = 0 \\
\zeta(G) &=& 1 - \int_{0}^{1} b_G (t) dt = 1 - \int_{0}^{1} 1 dt = 0 \\
\zeta(F+G) &=& 1 - \int_{0}^{1} b_{F+G} (t) dt = 1 - \int_{0}^{1} 0 dt = 1 \ .
\end{eqnarray*}
And we get that: 
\[\Pi(F,G) = \vert \zeta(F+G) - \zeta(F) - \zeta(G) \vert = \vert 1 - 0 - 0 \vert = 1 \ . \]
\end{proof}
We now have to compute $\Vert \left\{ F,G \right\} \Vert_{L_1}$. 
Recall that: 
\[\Vert \left\{ F,G \right\} \Vert_{L_1} = \int_{S^2} \vert dF \wedge dG \vert = \int_{S^2} \vert (\psi \circ P)^{\ast} (dx \wedge dy) \vert \ .\]
From lemma \ref{lem:smooth}, $\psi \circ P$ is a smooth function, then, as a corollary to the change of variables formula for a many-to-one function (see \cite{Ta}, theorem F.1) we have: 
\[\int_{S^2} \vert (\psi \circ P)^{\ast} (dx \wedge dy) \vert =  \int_{\psi \circ P (S^2)} n(x,y) \cdot dx \wedge dy \ , \]
with: \[n(x,y) = card((\psi \circ P)^{-1} (x,y)) \ . \]
Also, by lemma \ref{lem:bijection}, we know that $\psi \circ P$ covers $\Delta$ exactly twice (since $P$ projects the sphere twice onto $A$), hence $n(x,y) = 2$ for $(x,y) \in \Delta$. Thus:
\[\int_{S^2} \vert (\psi \circ P)^{\ast} (dx \wedge dy) \vert =  \int_{cl(\Delta)} n(x,y) dx \wedge dy = 
\int_{\Delta} 2 dx \wedge dy = 1 \ . \]
\subparagraph*{}
Thus we have shown that for Aarnes' 3-point quasi-state corresponding to these specific three points $p_1, p_2, p_3$ we have: 
\[ \dfrac{ \Pi(F,G)^2 }{ \Vert \left\{ F,G \right\} \Vert_{L_1} } = \dfrac{1^2}{1} = 1 \ . \]
Remark \ref{rem:3pnt_rat} concludes this proof for any 3-point quasi-state.
\end{proof}
\subsection{Proof of theorem \ref{thm:med_sup}}
\paragraph*{}
In the proof of theorem \ref{thm:med_sup} we will use the fact that diffeomorphisms preserve the relation between the extent of non-linearity of  median quasi-states and the $L_1$-norm of the Poisson bracket. 
\begin{rem} \label{rem:med_rat}
Let $h : M_1 \rightarrow M_2$ be a diffeomorphism of surfaces. If $\omega$ is an area form on $M_2$ then $h^{\ast} \omega$ is an area form on $M_1$. Take $\zeta_1$ and $\zeta_2$ to be the median quasi-states corresponding to $h^{\ast} \omega$ and $\omega$.
Recall that $m_F$, the median of a function $F \in C^{1} (M_2)$, is the unique connected component of the level set $F^{-1} (\zeta_2(F)) \subset M_2$ satisfying $\int_{B} \omega \leq \frac{1}{2} \int_{M_2} \omega$ for each connected component $B$ of $M_2 \setminus m_F$.
Since $h,h^{-1}$ are continuous functions, they take connected sets to connected sets, therefore $h^{-1} (m_F)$ is a connected component of the level set $(F \circ h)^{-1} (\zeta_2(F)) \subset M_1$. If $A$ is a connected component of $M_1 \setminus h^{-1} (m_F)$, then $h(A)$ must be a connected component of $M_2 \setminus (m_F)$. Therefore: 
\[\int_{A} h^{\ast} \omega = \int_{h(A)} \omega \leq \frac{1}{2} \int_{M_2} \omega = \frac{1}{2} \int_{M_1} h^{\ast} \omega \ . \]
Thus $h^{-1} (m_F)$ must be the median of the function $F \circ h$, which yields:  \[ \zeta_1 (F \circ h) = \zeta_2(F) \ . \]
Therefore if $\Pi_1$ and $\Pi_2$ are the extents of non-linearity of the quasi-states $\zeta_1$ and $\zeta_2$, we get:
\[ \Pi_1 (F \circ h, G \circ h) = \Pi_2 (F , G) \ . \]
Also, we have: 
\begin{multline*}
\Vert \left\{ F \circ h, G \circ h \right\} \Vert_{L_1} = 
\int_{M_1} \vert d(F \circ h) \wedge d(G \circ h) \vert = 
\int_{M_1} \vert h^{\ast} \left( dF \wedge dG \right) \vert = \\
\int_{h(M_1)} \vert dF \wedge dG \vert = 
\int_{M_2} \vert dF \wedge dG \vert = 
\Vert \left\{ F \circ G \right\} \Vert_{L_1} \ . 
\end{multline*}
Thus: 
\[ \dfrac{ \Pi_1 (F \circ h, G \circ h)^2 }{\Vert \left\{ F \circ h, G \circ h \right\} \Vert_{L_1}} = \dfrac{ \Pi_2 (F, G)^2}{\Vert \left\{ F, G \right\} \Vert_{L_1}} \ , \]
and: 
\[ \sup_{F,G \in C^{\infty} (M_1)} \dfrac{\Pi_1 (F, G)^2}{\Vert \left\{ F,G \right\} \Vert_{L_1}} = \sup_{F,G \in C^{\infty} (M_2)} \dfrac{ \Pi_2 (F,G)^2}{\Vert \left\{ F,G \right\} \Vert_{L_1}} \ . \]
\end{rem}
\newpage
\paragraph*{Proof of theorem \ref{thm:med_sup}}
\begin{proof}
Consider the triangle $ABC$ with vertices: 
\[\left\{ \begin{array}{lcr} A&=&(0,0) \\ B&=&(0,1) \\ C&=&(1,0) \end{array} \right. \ \]
in the $xy$-plane.
For $ \frac{1}{4} > \epsilon > 0$ draw the segments $DK$, $EJ$, $IL$ with:
\[ \left\{ \begin{array}{lcr} D&=&(0, \epsilon) \\ E&=&(0, 1-\epsilon) \\ I&=&(\epsilon, 0) \end{array} \qquad \begin{array}{lcr} K&=&(1-\epsilon, \epsilon) \\ J&=&(1-\epsilon, 0)  \\ L&=&(\epsilon, 1-\epsilon) \end{array} \right. \ . \]
Let $U$ be the triangle $\triangle ABC$ after smoothing its corners by curves that do not intersect the segments $DK$, $EJ$ and $IL$. Then the segments $DK$, $EJ$ and $IL$ divide $U$ into seven parts, $U_1, U_2, \ldots, U_7$.
\begin{center}

\begin{tikzpicture}
\draw[->,very thick] (-1,0) -- (6.5,0) node [label=right:$x$] {};
\draw[->,very thick] (0,-1) -- (0,6.5) node [label=above:$y$] {};

\coordinate [label=225:$A$] (A) at (0,0);
\coordinate [label=right:$B$] (B) at (0,6);
\coordinate [label=above:$C$] (C) at (6,0);
\coordinate [label=45:$D$] (D) at (0,1);
\coordinate [label=135:$E$] (E) at (0,5);
\coordinate [label=45:$I$] (I) at (1,0);
\coordinate [label=45:$J$] (J) at (5, 0);
\coordinate [label=right:$K$] (K) at (5,1);
\coordinate [label=right:$L$] (L) at (1,5);

\draw (A) -- (B) -- (C) -- (A);
\draw[rounded corners=16pt] (A) -- (B) -- (C) -- (A) -- (B);
\draw (D) -- (K);
\draw (E) -- (J);
\draw (I) -- (L);

\draw (-0.25, 6) node {$1$};
\draw (-0.25, 1) node {$\epsilon$};
\draw (-0.5, 5) node {$1-\epsilon$};
\draw (6, -0.25) node {$1$};
\draw (1, -0.25) node {$\epsilon$};
\draw (5, -0.25) node {$1-\epsilon$};

\draw (0.5,0.5) node {$U_1$};
\draw (0.5, 5) node {$U_2$};
\draw (5, 0.5) node {$U_3$};
\draw (0.5, 2) node {$U_4$};
\draw (3.5, 2) node {$U_5$};
\draw (2, 0.5) node {$U_6$};
\draw (2,2) node {$U_7$};
\end{tikzpicture}

\end{center}
\subparagraph*{}
Note that $U_7 \subset U \subset \triangle ABC$, and hence: 
\begin{equation} \label{eq:area}
\frac{(1-3 \epsilon)^2}{2} = Area(U_7) < Area(U) < Area(\triangle ABC) = \frac{1}{2} \ . 
\end{equation}
\subparagraph*{}
Let $u : U \rightarrow [0, \infty)$ be a function satisfying $u^{-1} (0) = \partial U$ with $0$ a regular value of $u$. And take $S$ to be the surface in $\mathbb{R}^3$ defined as $S := \left\{ z^2 = u(x,y) \right\}$.\\
Consider the following functions: 
\begin{itemize}
\item $P : S \rightarrow \mathbb{R}^2$ defined as $P(x,y,z) = (x,y)$ is the projection of $S$ to the plane. Note that $S \setminus P^{-1}(\partial U)$ has two connected components, 
\[ \left\{ (x,y, \pm \sqrt{ u(x,y) }) : (x,y) \in int(U) \right\} \ , \]
both of which are projected diffeomorphically to $int(U)$ by $P$.
\item $F : S \rightarrow \mathbb{R}$ defined as $F(x,y,z) = x$.
\item $G : S \rightarrow \mathbb{R}$ defined as $G(x,y,z) = y$.
\end{itemize}
Then by (\ref{eq:area}) we get: 
\begin{equation*}
\Vert \left\{ F,G \right\} \Vert_{L_1} = \int_S \vert dF \wedge dG \vert = 
\int_S \vert dx \wedge dy \vert = 2 \cdot Area (U) \in ( (1-3 \epsilon)^2, 1) \ .
\end{equation*}
\subparagraph*{}
Let $\sigma$ be an area form on $S$ such that:
\[ \int_{P^{-1} (U_1)} \sigma = \int_{P^{-1} (U_2)} \sigma = \int_{P^{-1} (U_3)} \sigma = \frac{2}{10} \]
and
\[ \int_{P^{-1} (U_4)} \sigma = \int_{P^{-1} (U_5)} \sigma = \int_{P^{-1} (U_6)} \sigma = \int_{P^{-1} (U_7)} \sigma = \frac{1}{10} \ . \]
Note that $\sigma$ is a normalized area form on $S$, and that each of the curves $P^{-1} (IL), P^{-1} (DK)$ and $P^{-1} (EJ)$ divides $S$ into two disks, one of area: 
\[ \frac{2}{10} + \frac{1}{10} + \frac{2}{10} = \frac{5}{10} = \frac{1}{2} \]
and the second of area: 
\[ \frac{1}{10} + \frac{1}{10} + \frac{1}{10} + \frac{2}{10} = \frac{5}{10} = \frac{1}{2} \ . \]
Then, if $\zeta$ is the median quasi-state corresponding to $\sigma$, we get: 
\[ \left\{ \begin{array}{lclcr} 
\zeta(F) & = & F(IL) & = & \epsilon \\
\zeta(G) & = & G(DK) & = & \epsilon \\
\zeta(F+G) & = & (F+G) (EJ) & = & 1-\epsilon \end{array} \right. .\]
\subparagraph*{}
Therefore: 
\[ \dfrac{ \Pi (F,G)^2}{\Vert \left\{ F,G \right\} \Vert_{L_1}} \geq \dfrac{ \vert 1-\epsilon - \epsilon - \epsilon \vert^2}{1}  \xrightarrow[\epsilon \rightarrow 0]{} 1 \ , \]
and hence we have: 
\[ \sup_{F,G \in C^{\infty}(S)} \dfrac{ \Pi (F,G)^2 }{\Vert \left\{ F,G \right\} \Vert_{L_1}} = 1 \ .\]
\subparagraph*{}
Note that $U$ is diffeomorphic to a closed disk, hence $S$ is diffeomorphic to the sphere, and there exists a diffeomorphism $h_1 : S^2 \rightarrow S$. Recall that $\sigma$ is a normalized area form on $S$, hence $\sigma_1 = {h_1}^{\ast} \sigma$ is a normalized area form on $S^2$. Let $\Pi_1$ be the extent of non-linearity of the median quasi-state corresponding to $\sigma_1$, then by remark \ref{rem:med_rat} we have: 
\[ \sup_{F,G \in C^{\infty} (S^2)} \dfrac{\Pi_1(F,G)^2}{\Vert \left\{ F,G \right\} \Vert_{L_1}} = \sup_{F,G \in C^{\infty}(S)} \dfrac{ \Pi (F,G)^2}{\Vert \left\{ F,G \right\} \Vert_{L_1}} = 1 \ . \]
If $\sigma_2$ is another normalized area form on $S^2$, then by Moser's theorem (see \cite{La}, 13.2) there exists a diffeomorphism $h_2 : S^2 \rightarrow S^2$, such that $\sigma_2 = {h_2}^{\ast} \sigma_1$. If $\Pi_2$ is the extent of non-linearity of the median quasi-state corresponding to $\sigma_2$, then by using remark \ref{rem:med_rat} again, we will get: 
\[ \sup_{F,G \in C^{\infty} (S^2)} \dfrac{ \Pi_2(F,G)^2}{\Vert \left\{ F,G \right\} \Vert_{L_1}} = \sup_{F,G \in C^{\infty}(S^2)} \dfrac{\Pi_1 (F,G)^2}{\Vert \left\{ F,G \right\} \Vert_{L_1}} = 1 \ . \]
Thus, for every normalized area form $\omega$ on $S^2$, if $\Pi$ is the extent of non-linearity of its corresponding median quasi-state, we have: 
\[ \sup_{F,G \in C^{\infty} (S^2)} \dfrac{ \Pi(F,G)^2}{\Vert \left\{ F,G \right\} \Vert_{L_1}} = 1 \ . \]
\end{proof}

\section*{Acknowledgements}
This work is an M.Sc. thesis prepared under the supervision of Prof. Leonid Polterovich, the department of pure mathematics, Tel-Aviv university.\\
I would like to thank both my supervisor Prof. Leonid Polterovich, and my colleague Dr. Frol Zapolsky, for their help and support.\\
This research was partially supported by the Israel Science Foundation grant 509/07.


\begin{thebibliography}{9}

\bibitem{Aa1}
Aarnes, J. F., 
\emph{Quasi-states and quasi-measures}, 
Adv. Math. 86 (1991), no. 1, 41-67.

\bibitem{Aa2}
Aarnes, J. F., 
\emph{Pure quasi-states and extremal quasi-measures}, 
Math. Ann. 295 (1993), no. 4, 575-588.

\bibitem{Aa3}
Aarnes, J. F., 
\emph{Construction of non-sub-additive measures and discretization of Borel measures}, 
Fund. Math. 147 (1993), 213-237.

\bibitem{Ar1}
Arnold, V. I., 
\emph{Mathematical methods of classical mechanics}, 2nd ed., 
Springer-Verlag, New York, 1989.

\bibitem{EPZ}
Entov, M., Polterovich, L. and Zapolsky, F., 
\emph{An "anti-Gleason" phenomenon and simultaneous measurements in classical
mechanics}, 
Foundations of Physics, 37:8 (2007), 1306-1316. 

\bibitem{GP}
Guillemin, V. and Pollack, A., 
\emph{Differential topology},
Prentice-Hall, New Jersey, 1974.

\bibitem{La}
Lang, S., 
\emph{Differential and Riemannian Manifolds}, 3rd ed., 
Springer-Verlag, New York, 1995.

\bibitem{Ta}
Taylor, M. E., 
\emph{Measure theory and integration}, 
American Mathematical Society, 2006.

\bibitem{Za}
Zapolsky, F.,
\emph{Quasi-states and the Poisson bracket on surfaces}, 
J. Mod. Dyn. 1 (2007), no. 3, 465-475.

\end{thebibliography}
\end{document}